\documentclass[11pt]{article}
\usepackage{amssymb, amsmath, amsthm, graphicx, mathrsfs}

\newtheorem{thrm}{Theorem}
\newtheorem{definition}{Definition}

\textheight220truemm
\textwidth190truemm
\hoffset=-25mm
\voffset=-20mm

\bibliographystyle{spmpsci} 

\title{A note on the first-order flexes of smooth surfaces\\ which are tangent to the set of all nonrigid surfaces}
\author{Victor Alexandrov \\ \textit{Sobolev Institute of Mathematics, Novosibirsk, Russia} \\ \texttt{alex@math.nsc.ru}}

\begin{document}
\maketitle

\begin{abstract}
We prove that first-order flexes of smooth surfaces in Euclidean 3-space, 
which are tangent to the set of all nonrigid surfaces,
can be extended to second-order flexes.
\par
\textit{Keywords}: infinitesimal flex of a surface, first-order flex, second-order flex, set of nonrigid surfaces 
\par
\textit{Mathematics subject classification (2010)}: 53A05, 53C24, 52C25
\end{abstract}

\section{Introduction}\label{sec1}

The notion of an infinitesimal flex of a smooth surface in $\mathbb{R}^3$ is classical in the theory of surfaces
(see, for example, \cite{Re32}, \cite{Ef48}, and references therein).
It is useful also for the study of polyhedral surfaces and frameworks
(see, for example, \cite{AR79}, \cite{CW96}, and references given there).

In \cite{Al20}, we suggested to consider first-order flexes of polyhedral surfaces subject to the 
additional  condition ``to be tangent to a subset of the configuration space consisting of all nonrigid 
polyhedral surfaces combinatorially equivalent to the polyhedral surface under study.''
This condition is of interest because, on the one hand, it narrows the set of first-order flexes and, on the other hand, 
it holds true for any flexible polyhedral surface.
In \cite{GHT21}, S.~J. Gortler, M. Holmes-Cerfon, and L. Theran proved that \cite{Al20} leads to a novel interpretation 
of the known sufficient condition for rigidity of frameworks called ``prestress stability.''

Let us explain the essence of this new condition by the example of a polyhedral surface~$P$,
shown on the left-hand side of Fig.~\ref{fig1}, obtained by an additional triangulation
of one of the faces of nondegenerate tetrahedron $T\subset\mathbb{R}^3$.
A nontrivial first-order flex of~$P$ is shown schematically in the central part of Fig.~\ref{fig1}, where the 
red arrow is perpendicular to the additionally triangulated face of~$T$ and depicts the velocity vector of 
the corresponding vertex under the first-order flex; the velocities of the remaining vertices are equal to zero.
On the right-hand side of Fig.~\ref{fig1}, the green arrow lies in the plane of the additionally triangulated face 
of~$T$ and represents the velocity of the corresponding vertex under an infinitesimal deformation, which is a 
tangent vector to the set of all first-order nonrigid polyhedral surfaces combinatorially equivalent to~$P$.
The velocities of the remaining vertices are again equal to zero.
It is clear from Fig.~\ref{fig1} that the requirements ``to be an infinitesimal flex of a polyhedral surface'' and 
``to be an infinitesimal deformation tangent to the set of all nonrigid triangulated polyhedral surfaces combinatorially 
equivalent to the given one'' describe completely different infinitesimal deformations.
\begin{figure}
\begin{center}
\includegraphics[width=0.25\textwidth]{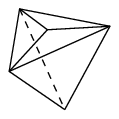}
\qquad
\includegraphics[width=0.25\textwidth]{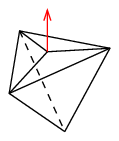}
\qquad
\includegraphics[width=0.25\textwidth]{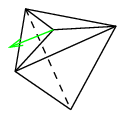}
\end{center}
\caption{\textit{On left}: Polyhedral surface~$P$ that is obtained from a tetrahedron $T$ by additional triangulation of one of its faces. \textit{In the center}: Red arrow is perpendicular to the additionally triangulated face of $T$. It represents a non-trivial first-order flex of~$P$. The velocities of all other vertices of~$P$ are equal to zero. \textit{On right}: Green arrow lies in the additionally triangulated face of $T$. It represents an infinitesimal deformation of~$P$ which is tangent to the set of all first-order nonrigid polyhedral surfaces. The velocities of all other vertices are equal to zero.}\label{fig1}
\end{figure}

In this note, we show that if a smooth surface in $\mathbb{R}^3$ is a smooth point of the set of nonrigid surfaces, then every
its first-order flex tangent to the set of nonrigid surfaces can be extended to a second-order flex.

\section{Definitions and notation}\label{sec2}

Following \cite{Re32} and \cite{Ef48}, we recall the standard definition of a higher-order flex of a smooth surface.

\begin{definition}\label{def_1}
\emph{Let $S$ be a smooth boundary free surface in $\mathbb R^3$ with position vector $\boldsymbol{x}$, 
$n$ be a positive integer,
and $\boldsymbol{\xi}^{(1)}$, $\boldsymbol{\xi}^{(2)}$, \dots, $\boldsymbol{\xi}^{(n)}$ be smooth vector fields on $S$. 
We say that the deformation of $S$ given by the formula
\begin{equation}\label{eqn_2:1}
\boldsymbol{x}_t=\boldsymbol{x}+2t\boldsymbol{\xi}^{(1)}+2t^2\boldsymbol{\xi}^{(2)}+\dots+2t^n\boldsymbol{\xi}^{(n)}
\end{equation}
is an \textit{$n$th-order flex} of $S$ 
if the change in the length of any smooth curve on $S$ is $o(t^n)$ as $t\to 0$.}
\end{definition}

Definition \ref{def_1} leads to the following relation for the first quadratic forms
$d\boldsymbol{x}_t^2-d\boldsymbol{x}^2= o(t^n)$
or, equivalently, for differentials 
$(d\boldsymbol{x}_t-d\boldsymbol{x})\,(d\boldsymbol{x}_t+d\boldsymbol{x}) = o(t^n)$.
Replacing $\boldsymbol{x}_t$ according to (\ref{eqn_2:1}), we get
\begin{equation*}
d\bigl(t\boldsymbol{\xi}^{(1)}+t^2\boldsymbol{\xi}^{(2)}+\dots+t^n\boldsymbol{\xi}^{(n)}\bigr)\,
d\bigl(\boldsymbol{x}+t\boldsymbol{\xi}^{(1)}+t^2\boldsymbol{\xi}^{(2)}+\dots+t^n\boldsymbol{\xi}^{(n)}\bigr) = o(t^n)
\end{equation*}
and, thus, obtain the following equations 
\begin{eqnarray}
d\boldsymbol{x}\, d\boldsymbol{\xi}^{(1)} &=&0,\label{eqn_2:2}\\
d\boldsymbol{x}\, d\boldsymbol{\xi}^{(2)}+ d\boldsymbol{\xi}^{(1)}\,d\boldsymbol{\xi}^{(1)}&=&0,\label{eqn_2:3}\\
\dots \quad \dots \quad \dots & & \dots \nonumber \\
d\boldsymbol{x}\, d\boldsymbol{\xi}^{(n)}+\sum_{j=1}^{n-1}d\boldsymbol{\xi}^{(j)}\,d\boldsymbol{\xi}^{(n-j)} &=&0.\label{eqn_2:4}
\end{eqnarray}

Note that the presence of factors 2 in expression (\ref{eqn_2:1}) simplifies equations (\ref{eqn_2:2})--(\ref{eqn_2:4}).
This simplification was proposed by E. Rembs in \cite{Re32} in 1932.
Since that time, expression (\ref{eqn_2:1}) has been the standard notation for the $n$th-order flex.

Equation (\ref{eqn_2:2}) means that $\boldsymbol{x}+2t\boldsymbol{\xi}^{(1)}$ is a first-order flex of $S$.
In local coordinates $u,v$ on $S$, (\ref{eqn_2:2}) is equivalent to 
$(\boldsymbol{x}_udu+\boldsymbol{x}_vdv)\, (\boldsymbol{\xi}_u^{(1)}du+\boldsymbol{\xi}_v^{(1)}dv) =0$
and, thus, to the following system of three partial differential equations
\begin{equation}\label{eqn2:5}
\boldsymbol{x}_u \cdot \boldsymbol{\xi}_u^{(1)} =0,\quad
\boldsymbol{x}_u \cdot \boldsymbol{\xi}_v^{(1)} +\boldsymbol{x}_v \cdot \boldsymbol{\xi}_u^{(1)} =0,  \quad
\boldsymbol{x}_v \cdot \boldsymbol{\xi}_v^{(1)} =0, 
\end{equation}
where $\cdot$ stands for the scalar product in $\mathbb{R}^3$.
A first-order flex $\boldsymbol{x}+2t\boldsymbol{\xi}^{(1)}$ of $S$ is called \textit{trivial} if it is
generated by a smooth family of isometries of $\mathbb{R}^3$.
$S$ is called \textit{first-order rigid} if every its first-order flex is trivial;
otherwise, $S$ is called \textit{first-order nonrigid}.

Equations (\ref{eqn_2:2}) and (\ref{eqn_2:3}) mean that $\boldsymbol{x}+2t\boldsymbol{\xi}^{(1)}+2t^2\boldsymbol{\xi}^{(2)}$ 
is a second-order flex of $S$.
It is called an \textit{extension} of the first-order flex $\boldsymbol{x}+2t\boldsymbol{\xi}^{(1)}$.

\begin{definition}\label{def_2}
\emph{Let $S$ be a smooth boundary free surface in $\mathbb R^3$ with position vector $\boldsymbol{x}$.
We say that a first-order flex $\boldsymbol{x}+2t\boldsymbol{\xi}^{(1)}$ of $S$ is \textit{tangent} 
to the set of all nonrigid smooth surfaces if the following conditions hold true:}

\emph{(i) there is a smooth family $\{S(r)\}_{r\in(-1,1)}$ of boundary free nonrigid smooth surfaces in $\mathbb R^3$
such that $S(0)=S$, i.\,e., $\boldsymbol{x}(0)=\boldsymbol{x}$, where $\boldsymbol{x}(r)$ is position vector of $S(r)$;}

\emph{(ii) there is a smooth family $\{2\boldsymbol{\xi}^{(1)}(r)\}_{r\in(-1,1)}$ of vector fields such that,
for every $r\in (-1,1)$, $\boldsymbol{x}(r)+2t\boldsymbol{\xi}^{(1)}(r)$ is a first-order flex of $S(r)$  and}
\begin{equation}\label{eqn_2:6}
\frac{d}{dr}\biggl\vert_{r=0}\biggr. \boldsymbol{x}(r)=2\boldsymbol{\xi}^{(1)}(0)\qquad\mbox{\emph{and}}\qquad 
\boldsymbol{\xi}^{(1)}(0)=\boldsymbol{\xi}^{(1)}.
\end{equation}
\end{definition}

Conditions (i) and (ii) of Definition~\ref{def_2} mean that $S$ lies on the curve 
$\{S(r)\}_{r\in(-1,1)}$, located in the set of nonrigid surfaces, and 
$2\boldsymbol{\xi}^{(1)}$ is the velocity vector of the point $\boldsymbol{x}(r)$ moving along this curve at $S$.
That is, Definition~\ref {def_2} is consistent with the standard for classical differential geometry 
point of view on the tangent vector to a surface as the velocity vector of a point moving along the curve 
lying on the surface.

\section{Main result}\label{sec3}

\begin{thrm}\label{th_1}
Let $S$ be a smooth boundary free surface in $\mathbb R^3$ with position vector $\boldsymbol{x}$.
And let $\boldsymbol{x}+2t\boldsymbol{\xi}^{(1)}$ be a first-order flex of $S$,
which is tangent to the set of all nonrigid smooth surfaces.
Then the first-order flex $\boldsymbol{x}+2t\boldsymbol{\xi}^{(1)}$ can be extended to 
a second-order flex of~$S$.
\end{thrm}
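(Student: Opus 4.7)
The plan is to obtain the required second-order correction $\boldsymbol{\xi}^{(2)}$ by differentiating, with respect to the deformation parameter $r$, the first-order flex equation that holds along the auxiliary family $\{S(r)\}$. The tangency hypothesis in Definition~\ref{def_2} is precisely what is needed for this differentiation to reproduce equation (\ref{eqn_2:3}).

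First I would rewrite condition (ii) of Definition~\ref{def_2} in the form of equation (\ref{eqn_2:2}) at each parameter value: since $\boldsymbol{x}(r) + 2t\boldsymbol{\xi}^{(1)}(r)$ is a first-order flex of $S(r)$, the identity
\begin{equation*}
d\boldsymbol{x}(r) \cdot d\boldsymbol{\xi}^{(1)}(r) = 0
\end{equation*}
holds for every $r \in (-1,1)$. Because both families are smooth in $r$ and in the surface parameters $u,v$, I can differentiate this identity with respect to $r$ at $r=0$, commuting $\partial_r$ with the exterior differential $d$ in $u,v$.

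Next I would substitute the tangency relations (\ref{eqn_2:6}), namely $\partial_r\boldsymbol{x}(r)|_{r=0}=2\boldsymbol{\xi}^{(1)}$ and $\boldsymbol{\xi}^{(1)}(0)=\boldsymbol{\xi}^{(1)}$. Writing $\boldsymbol{\eta} := \partial_r\boldsymbol{\xi}^{(1)}(r)|_{r=0}$, the $r$-derivative of the above identity reduces, by the Leibniz rule, to
\begin{equation*}
2\, d\boldsymbol{\xi}^{(1)} \cdot d\boldsymbol{\xi}^{(1)} + d\boldsymbol{x} \cdot d\boldsymbol{\eta} = 0.
\end{equation*}
I would then set $\boldsymbol{\xi}^{(2)} := \tfrac{1}{2}\boldsymbol{\eta}$; this displayed relation becomes exactly equation (\ref{eqn_2:3}), so $\boldsymbol{x} + 2t\boldsymbol{\xi}^{(1)} + 2t^2\boldsymbol{\xi}^{(2)}$ is the required second-order extension.

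Because the argument reduces to a single differentiation, there is no genuine technical obstacle. The only conceptual point worth highlighting is that it is precisely the coupling encoded in (\ref{eqn_2:6}) — the fact that $\partial_r\boldsymbol{x}(r)|_{r=0}$ equals $2\boldsymbol{\xi}^{(1)}$ rather than being independent of $\boldsymbol{\xi}^{(1)}(0)$ — that produces the factor $2\,d\boldsymbol{\xi}^{(1)}\cdot d\boldsymbol{\xi}^{(1)}$ on the left-hand side and thereby matches the inhomogeneous term of (\ref{eqn_2:3}). Without this coupling a general first-order flex need not be second-order extendable, which is consistent with the classical obstructions discussed in \cite{Re32} and \cite{Ef48}.
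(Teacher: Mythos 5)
Your proposal is correct and follows essentially the same route as the paper's own proof: differentiate the identity $d\boldsymbol{x}(r)\,d\boldsymbol{\xi}^{(1)}(r)=0$ with respect to $r$ at $r=0$, substitute the tangency relations (\ref{eqn_2:6}), and define $\boldsymbol{\xi}^{(2)}$ as half of $\partial_r\boldsymbol{\xi}^{(1)}(r)\vert_{r=0}$ so that the resulting identity is exactly (\ref{eqn_2:3}). The normalization matches the paper's choice $2\boldsymbol{\xi}^{(2)}=\frac{d}{dr}\vert_{r=0}\boldsymbol{\xi}^{(1)}(r)$, and no step is missing.
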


\begin{proof}
Let $\{S(r)\}_{r\in(-1,1)}$ and $\{2\boldsymbol{\xi}^{(1)}(r)\}_{r\in(-1,1)}$  be the smooth families from 
Definition~\ref{def_2}.
Since, for every $r\in (-1,1)$, $\boldsymbol{x}(r)+2t\boldsymbol{\xi}^{(1)}(r)$ is a first-order flex of $\boldsymbol{x}(r)$, 
we have $d\boldsymbol{x}(r)\,d\boldsymbol{\xi}^{(1)}(r)=0$.
Differentiating this equality at $r=0$ and taking into account (\ref{eqn_2:6}), we get 
\begin{equation}\label{eqn_3:1}
0 = \dfrac{d}{dr}\biggl\vert_{r=0}\biggr. \bigl[d\boldsymbol{x}(r)\,d\boldsymbol{\xi}^{(1)}(r)\bigr] = 
d\biggl[\dfrac{d}{dr}\biggl\vert_{r=0}\boldsymbol{x}(r)\biggr]\, d\boldsymbol{\xi}^{(1)}(0) 
+ d\boldsymbol{x}(0)\,d\biggl[\dfrac{d}{dr}\biggl\vert_{r=0}\boldsymbol{\xi}^{(1)}(r)\biggr] = 
2\bigl[d\boldsymbol{\xi}^{(1)}\, d\boldsymbol{\xi}^{(1)} + d\boldsymbol{x}\,d\boldsymbol{\xi}^{(2)}\bigr],
\end{equation}
where we have putten by definition
\begin{equation*}
2\boldsymbol{\xi}^{(2)}=\dfrac{d}{dr}\biggl\vert_{r=0}\boldsymbol{\xi}^{(1)}(r).
\end{equation*}
It follows from (\ref{eqn_3:1}) that 
$d\boldsymbol{x}\,d\boldsymbol{\xi}^{(2)} + d\boldsymbol{\xi}^{(1)}\, d\boldsymbol{\xi}^{(1)} =0$ 
and, thus, the deformation $\boldsymbol{x}+2t\boldsymbol{\xi}^{(1)}+2t^2\boldsymbol{\xi}^{(2)}$ 
is a second-order flex of $S$.
On the other hand, $\boldsymbol{x}+2t\boldsymbol{\xi}^{(1)}+2t^2\boldsymbol{\xi}^{(2)}$ is obviously an extension of 
the first-order flex $\boldsymbol{x}+2t\boldsymbol{\xi}^{(1)}$.
\end{proof}

Theorem \ref{th_1} shows that, for a smooth surface $S$ in $\mathbb{R}^3$ which is a smooth point of the set $\mathscr{S}$ 
of all nonrigid surfaces, the condition that its first-order flex is tangent to $\mathscr{S}$ implies that this
first-order flex can be extended to a second-order flex.
We cannot prove a similar statement in the case when $S$ is not a smooth point of $\mathscr{S}$, 
since we know nothing about the structure of $\mathscr{S}$.
For example, we do not know what its dimension (or codimension) is, nor what is the stucture of the set of its nonsmooth points.
We can only hope that these issues will be clarified in the future.

\section{Acknowledgment}\label{sec5}

The study was carried out within the framework of the state contract
of the Sobolev Institute of Mathematics (project no. 0314-2019-0006).

\end{document}